\pgfplotsset{/pgf/number format/use comma, compat=newest}
\pgfplotsset{/pgf/number format/use comma, compat=newest}
\numberwithin{equation}{section}
\newtheorem{theorem}{Theorem}[section]
\theoremstyle{definition}
\newcommand\RR{{\mathbb{R}}}
\newcommand\I{{\mathcal{I}}}
\title[Restriction estimates for the free
group]{Restriction estimates for the free two step nilpotent
group on three generators }
\author{Valentina Casarino}
\address{Universit\`a degli Studi di Padova\\Stradella san Nicola
3\\I-36100 Vicenza, Italy}
\email {valentina.casarino@unipd.it}
\author{Paolo Ciatti}
\address{Universit\`a degli Studi di Padova\\Via Marzolo 9\\I-35100
Padova, Italy}
\email{paolo.ciatti@unipd.it}
\thanks{Research supported by the Italian \emph{Ministero dell'Istruzione, dell'Universit\`a e della Ricerca} through PRIN \emph{``Real and Complex Manifolds: Geometry, Topology
and Harmonic Analysis''} and the GNAMPA project
2016 \emph{
``Calcolo funzionale per operatori subellittici su variet\`a''}.  
}
\keywords{Free nilpotent Lie groups. Sub-Laplacians.
Restriction theorems.
}
\subjclass[2000]{22E25;  22E30,  43A80, 47B40}
\begin{document}
\begin{abstract}
Let $G$ be the free two step nilpotent Lie group on three
generators and let $L$ be a subLaplacian on it.
We compute the spectral resolution of $L$
and prove that the operators arising from this decomposition
enjoy a Tomas-Stein type estimate.
\end{abstract}
\maketitle

\date{\today, \thistime}


\section{Introduction}\label{introduction}

Starting from the observation of E. Stein
that, when
the Lebesgue exponent $p$ is sufficiently close to $1$, the Fourier transform of an $L^p$ function restricts in a sense, that may be made precise,
to a 
compact hypersurface of nonvanishing curvature,
various forms of
restriction theorems for the Fourier transform
became one of 
the main theme of 
analysis.

The most satisfactory result obtained so far in this theory
is the Stein-Tomas theorem, which concerns the restriction
in the $L^2$- sense
of the Fourier transform of a function in $L^p(\mathbb R^n)$,
with $1\leq p \leq 2\frac{n+1}{n+3}$, to the sphere $S^{n-1}$.
This result was proved in the mid seventies of the last century
and, as any very important theorem, has now a great deal
of extensions and applications in different
branches of mathematics.
In particular, it was observed by Stein himself and by
R. Strichartz in \cite{Str} that the Stein-Tomas estimates can be 
interpreted
as bounds concerning the mapping properties
between Lebesgue spaces
of the operators arising in the
spectral decomposition of the Euclidean Laplacian.

This point of view is emphasised in the works of
C. Sogge, who studied the boundedness of
the spectral projections of the Laplace-Beltrami
operator on the spheres in \cite{Sogge87} and more generally
on compact Riemannian manifolds
in \cite{Sogge88}. 
Few years later D. M\"uller proved for the first time a result of this sort for a subelliptic operator, the subLaplacian on the Heisenberg group.
In the series of papers \cite{C1}, \cite{C2},
\cite{CC}, \cite{CC3} the authors of this article studied a sort of combination
of the previous results considering estimates
for the joint spectral projections
of a Laplace-Beltrami operator and
a subLaplacian.

Some years ago we started in \cite{CC} and \cite{CC2}
an investigation devoted to extend the result of M\"uller to subLaplacians on more general
two step nilpotent Lie groups.
In those papers 
 we considered 
  groups
enjoying a special nondegeneracy condition,
according to which the quotient 
with respect to a codimension one subspace of the center
is isomorphic to a Heisenberg group. 

In this paper we are instead concerned with the mapping properties
of the operators arising in
the spectral decomposition of an invariant subLaplacian
on the free two step nilpotent Lie group on three
generators.
In this group the quotient with respect to a codimension one subspace 
of the center is isomorphic to the direct product of the
three dimensional Heisenberg group and the real line.
Therefore, the nondegeneracy property
we exploited in the previous works here is absent. 
However, the analysis on this group is made easier 
by the fact that the abelian component
in the quotients with respect to the planes in the center is always one dimensional
and by the fact that the combined 
 action of the rotation
group on the two layers of the Lie algebra gives rise to a
family of
automorphisms.

\medskip

We conclude the introduction with a short description of the next sections.

In Section 2 we describe some features of the group
$G$
we are concerned with.
Decomposing its Lie algebra $\mathfrak g$ 
 into the direct sum 
 $\mathfrak z\oplus\mathfrak v$, where $\mathfrak z$ is the center,
we see that once a nontrivial linear form
$\mu$ on $\mathfrak z$ has been fixed,
the quotient of $\mathfrak g$ with respect to the null space
of $\mu$ is the direct product of a three dimensional
Heisenberg algebra times 
 the real line.
Moreover, these two components, the Heisenberg and the abelian one,
depend only on the line in $\mathfrak z^*$ spanned by $\mu$.

Then in Section 3 we derive the spectral decomposition
of an $L^2$ function $f$ with respect to the subLaplacian. This
decomposition is expressed first of all in terms of the
Fourier transform $f^\mu$ of $f$ on 
$\mathfrak z$,
which for every $\mu$ in the dual of $\mathfrak z$
is a function living on $\mathfrak v$.
This transform is followed by the section of the Fourier
transform of $f^\mu$ in
the direction of the radical of $\mu$,
which is finally decomposed into eigenfunctions
of the two dimensional twisted Laplacian.

Finally, in Section 4 we prove that the operators arising from the
spectral decomposition of the subLaplacian
are bounded for
 $1\leq s\leq 6/5$ and $1\leq p\leq 2$ from the nonisotropic Lebesgue
space $L^s_\mathfrak z L^p_\mathfrak v$, in which the integrations in $\mathfrak z$
and $\mathfrak v$ are weighted with
exponents $s$ and $p$, to $L^{s'}_\mathfrak z L^2_\mathfrak v$.
The basic ingredients in these estimates are provided
by the Stein-Tomas theorem, which in fact
dictates the range of the exponents concerning
the integration on $\mathfrak z$,
and by the estimates proved some years ago
by H. Koch and F. Ricci in \cite{KR} for the twisted Laplacian.
The range of exponents for which our estimates
hold is sharp as examples analogue to those
provided in \cite{CC} show.

\section{Generalities}\label{generalities}

Let $G$ be a free two step nilpotent Lie group
on three generators.
We assume that $G$ is connected and simply connected.

The Lie algebra, $\mathfrak g$, of $G$ splits as
a vector space into the direct sum
$\mathfrak g =\mathfrak v \oplus \mathfrak z$, where
$\mathfrak z$ is the centre.
Both $\mathfrak v$ and $\mathfrak z$ are three dimensional vector spaces.
To proceed it is convenient to introduce on $\mathfrak g$ an inner product $\langle \cdot,
\cdot \rangle$ with respect to which $\mathfrak v$ and $\mathfrak z$ are orthogonal subspaces.
The inner product induces a norm on $\mathfrak g$ and a norm on  $\mathfrak g^*$, the space of linear forms on $\mathfrak g$, which we will both denote by $|\cdot|$.

We shall always identify $G$ with its Lie algebra $\mathfrak g$
by means of the exponential mapping and
use coordinates $(x, z)$ with $x = (x_1,x_2,x_3)\in
\mathbb R^3$ labeling the points of $\mathfrak v$
and $z=(z_1,z_2,z_3)\in
\mathbb R^3$ the points of $\mathfrak z$.
The vector fields
\begin{equation*}
X_1
=
{\partial }_{x_1} + \frac{x_3}2 {\partial }_{z_2}- \frac{x_2}2 {\partial }_{z_3},\quad
X_2
=
{\partial }_{x_2} + \frac{x_1}2 {\partial }_{z_3}- \frac{x_3}2 {\partial }_{z_1},\quad
X_3
=
{\partial }_{x_3} + \frac{x_2}2 {\partial }_{z_1}- \frac{x_1}2 {\partial }_{z_2}
 \end{equation*}
are left invariant and satisfy
\begin{equation}\label{Liebr}
[X_1,X_2] 
=
\partial_{z_3} =Z_3,
\quad
[X_2,X_3] 
=
\partial_{z_1} = Z_1,
\quad
[X_3,X_1] 
=
\partial_{z_2} = Z_2.
\end{equation}

Fix a point $\omega$ in
the unit sphere
$S = \{\nu \in \mathfrak z^*: |\nu| =1\}$;
here $\mathfrak z^*$ denotes the space of linear forms on $\mathfrak z$.
We call $\mathfrak r_\omega$ the radical of the skew-symmetric 
 form
$$
\mathfrak v\times\mathfrak v \ni(X, Y) \mapsto \omega([X,Y]),
$$
that is the space
$$
\mathfrak r_\omega = \{X\in \mathfrak v:
\omega ([X,Y]) = 0 \, \text{for all}\, Y\in \mathfrak v\},
$$
which as follows from \eqref{Liebr} is one dimensional. 

If $Z_\omega\in\mathfrak z$ satisfies $\omega(Z_\omega) = 1$, then $|Z_\omega|=1$ and
\begin{equation*}
[X,Y] = \omega([X,Y]) Z_\omega,
\quad
X,Y \in \mathfrak v.
\end{equation*}
Thus if we fix two unit vectors $X_\omega$ and $Y_\omega$ in
$\mathfrak v_\omega$
satisfying $\omega([X_\omega,Y_\omega])=1$,
then the subspace $\mathfrak h_\omega$ spanned
by $\{X_\omega,Y_\omega,Z_\omega\}$
is a Lie algebra isomorphic to the three dimensional
Heisenberg algebra.
 
The subspace $\mathfrak k_\omega$ of $\mathfrak z$
orthogonal to $Z_\omega$ coincides with the kernel of $\omega$.
Therefore the quotient of $\mathfrak g$
with respect to $\mathfrak k_\omega$ is isomorphic
as a Lie algebra to the direct product of
$\mathfrak h_\omega$ and $\mathfrak r_\omega$.
To equip this algebra with a system of coordinates, we decompose a vector
into the sum
$$
v_\omega V_\omega
+x_\omega X_\omega+y_\omega Y_\omega
+z_\omega Z_\omega,
$$
where $V_\omega$ is a unit vector 
in $\mathfrak r_\omega$ and
$(v_\omega,x_\omega,y_\omega,z_\omega) \in \mathbb R^4$.
We shall identify, with a slight abuse of notation, $\omega$ with the linear form in $\mathfrak v^*$ canonically associated
to $V_\omega$, 
 writing $v_\omega=\omega(X)$.
 
Fixing two orthonormal vectors $\{W_{\omega,1}, W_{\omega,2}\}$ in $\mathfrak k_\omega$, we obtain an orthogonal basis $\{Z_\omega, W_{\omega,1}, W_{\omega,2}\}$ of $\mathfrak z$.
Then any vector in $\mathfrak g$ can be uniquely written as
$$
v_\omega V_\omega+x_\omega X_\omega+y_\omega Y_\omega+z_\omega Z_\omega
+ w_{\omega,1}W_{\omega,1}+w_{\omega,2}W_{\omega,2},
$$
where
$(v_\omega,x_\omega,y_\omega,z_\omega,
w_{\omega,1},w_{\omega,2}) \in \mathbb R^6$
are coordinates adapted to $\omega$.

The transformation mapping
$X=(x_1,x_2,x_3)$ to $(x_\omega, y_\omega, v_\omega)$ is a rotation $R_\omega \in SO(3)$ fixing $V_\omega$ 
and hence 
\begin{equation*}
(x_\omega,y_\omega)=x_\omega X_\omega+y_\omega Y_\omega
=R_\omega(X) - \omega(X) V_\omega.
\end{equation*}
Since by the universal property rotations of $\mathfrak v$ extend
to automorphisms of $\mathfrak g$, $R_\omega$
 preserves $Z_\omega$ and
maps $(z_1,z_2,z_3)$ to $(
w_{\omega,1},w_{\omega,2},z_\omega)$.

\section{The subLaplacian}

The vector fields $X_1$, $X_2$, $X_3$
generate by \eqref{Liebr}  the entire Lie algebra
$\mathfrak g$.
Therefore by H\"ormander's theorem
the subLaplacian
$$
L = -X_1^2-X_2^2-X_3^2
$$
is a hypoelliptic differential operator.
It is easy to see that
\begin{align*}
L &= -\partial_{x_1}^2 -\partial_{x_2}^2 -\partial_{x_3}^2 + \frac 12 (x_2\partial_{x_3}
- x_3\partial_{x_1})\partial_{z_1}+ \frac 12 (x_3\partial_{x_1}
- x_2\partial_{x_3})\partial_{z_2}+ \frac 12 (x_1\partial_{x_2}
- x_2\partial_{x_1})\partial_{z_3}\\
&- \frac 14 (x_2^2+ x_3^2)\partial_{z_1}^2- \frac 14 (x_1^2+ x_3^2)\partial_{z_2}^2
- \frac 14 (x_1^2+ x_2^2)\partial_{z_3}^2
\end{align*}
and hence that $L$ is
homogeneous with respect to 
the dilations defined by
 $\delta_\epsilon (X,Z) = (\epsilon x,
\epsilon^2 z)$, $\epsilon >0$, that is
$$
L(f\circ \delta_\epsilon) = \epsilon^2 (Lf)\circ \delta_\epsilon.
$$

The subLaplacian is a symmetric operator on the Schwartz class
and extends to a self-adjoint operator on $L^2(G)$
with positive spectrum.
Our plan in the section consists in deriving its spectral decomposition.
Our construction is largely inspired by the analogous derivations
in \cite{ACDS} and \cite{MM}.
To accomplish the task, given a Schwartz function $f$ on $G$, we take
its partial Fourier transform in the central variables
$$
f^\mu (X) = \int_{\mathfrak z^*} e^{-i \mu(Z)} f(X,Z) dZ,
\qquad \mu\in \mathfrak z ^*.
$$

We then introduce spherical coordinates
in $\mathfrak z ^*$, writing $\mu = \rho \omega$,
where $\rho = |\mu|$ and $\omega$
belongs to $S$.
In these coordinates the Fourier inversion formula in $\mathfrak z^*$ reads
\begin{align}\label{fourierinv}
f(X,Z) = \int_S \int_{0}^\infty e^{i \rho \omega(Z)} f^{\rho \omega} (X) \rho^{2} d\rho
d\omega.
\end{align}
In particular, we have
\begin{align}\label{Xf}
X_a f(X,Z) = \int_S \int_{0}^\infty e^{i \rho \omega(Z)} X_a^{\rho\omega}f^{\rho \omega} (X) \rho^{2} d\rho
d\omega,
\qquad a=1,2,3,
\end{align}
where the vector fields $X_a^{\rho\omega}$ are the differential operators on $\mathfrak v$ 
defined by
\begin{equation*}
X_a\left(e^{-i \rho\omega(Z)} g(X)\right) = e^{-i \rho\omega(Z)} X_a^{\rho\omega} g(X),\quad
a =1,2,3,
\end{equation*}
for any Schwartz function $g$. 
They are more explicitly given by
\begin{equation*}
X^{\rho\omega}_1
=
{\partial }_{x_1} + \frac{i}2\rho (\omega_3x_2-\omega_2x_3),\quad
X^{\rho\omega}_2
=
{\partial }_{x_2} + \frac{i}2\rho (\omega_1x_3-\omega_3x_1),\quad
X^{\rho\omega}_3
=
{\partial }_{x_3} + \frac{i}2\rho (\omega_2x_1-\omega_1x_2),
 \end{equation*}
where $\omega_a = \omega\left( Z_a\right)$,
$a=1,2,3$.
Defininig the differential operator
 $L^{\rho\omega}$ on $\mathfrak v$ by
\begin{equation}\label{L e Deltat}
L\left(e^{-i \rho\omega(Z)} g(X)\right) = e^{-i \rho\omega(Z)} L^{\rho\omega} g(X),
\end{equation}
we find
\begin{align*}
L^{\rho\omega} &= -\partial_{x_1}^2 -\partial_{x_2}^2 -\partial_{x_3}^2 + \frac i2\rho \omega_1 (x_3\partial_{x_1}-x_2\partial_{x_3}
)+ \frac i2\rho \omega_2 (x_2\partial_{x_3}-x_3\partial_{x_1}
)+ \frac i2\rho \omega_3 (x_2\partial_{x_1}-x_1\partial_{x_2}
)\\
&+ \frac 14 \rho^2\omega_1^2 (x_2^2+ x_3^2)+\frac 14 \rho^2\omega_2^2 (x_1^2+ x_3^2)
+ \frac 14 \rho^2\omega_3^2 \rho^2 (x_1^2+ x_2^2).
\end{align*}
In the coordinates $(v_\omega, x_\omega, y_\omega, z_\omega,
w_{\omega,1}, w_{\omega,2})$ 
the operator $L^{\rho\omega}$
becomes
\begin{align}\label{Lomegainadattate}
L^{\rho\omega}_{x_\omega,y_\omega,v_\omega} 
&= 
 -\partial_{v_\omega}^2 -\partial_{x_\omega}^2 -\partial_{y_\omega}^2 
+ \frac i2  \rho (y_\omega\partial_{x_\omega}-x_\omega\partial_{y_\omega}
)+ \frac 14 \rho^2 (x_\omega^2+ y_\omega^2)
\\\notag
&= -\partial_{v_\omega}^2 +\Delta^{\rho}_{x_\omega,y_\omega},
\end{align}
where
\begin{align*}
\Delta_{s, t}^{\lambda} =
-\frac{\partial^2}{\partial s^2}-\frac{\partial^2}{\partial t^2}
+ \frac i2 \lambda
\left(t \frac{\partial}{\partial s} -s \frac{\partial}{\partial t} \right)
+ \frac {\lambda^2}4 \left( s^2 + t^2\right)
\end{align*}
is the two dimensional $\lambda$-twisted Laplacian.

The spectrum of $\Delta_{s, t}^{\lambda}$
consists of eigenvalues,
which are given by $\lambda(2k+1)$ with $k =0,1,2,\cdots$.
We write the spectral decomposition of $g$ 
 with respect to $\Delta_{s, t}^{\lambda}$ as
\begin{equation}\label{decomposizione del lapltw}
g(s,t) = \sum\limits_{k=0}^\infty \Lambda_k^\lambda g(s,t),
\end{equation}
where $\Lambda_k^\lambda$ 
maps $L^2(\mathbb R^2)$ onto the eigenspace
associated to $\lambda(2k+1)$.
These are integral operators (for more details about them see, for instance, \cite[p.19 ff]{Th})
satisfying the estimates
\begin{equation}\label{KR 1}
\|\Lambda^\lambda_{k} f\|_{L^{2}(\RR^{2})}
\leq C \lambda^{\frac1p -\frac12}
(2k+1)^{\gamma(\frac1p)} \| f\|_{L^{p}(\RR^{2})}\,,
\quad
 1\le p\le 2\,,
\end{equation}
proved by H. Koch and F. Ricci in \cite{KR};
here $\gamma$
is the piecewise
affine function on $[\frac12,1]$
defined by
\begin{equation}\label{gamma}
\gamma\left (\frac1p \right):=
\begin{cases}\!
{\frac1p-1}
&\,\text{if
  $1\le p\leq \frac65$,}\cr
{\frac{1}{2}(\frac{1}{2}-\frac1p)}
&\,\text{if
  $ \frac65\le p\le 2$.}\cr
\end{cases}
\end{equation}

In order to decompose
$f^{\rho \omega}$ into eigenfunctions of
the twisted Laplacian,
we define the function $g_{\rho,\omega} = f^{\rho \omega}\circ R_\omega^{-1}$, where $R_\omega^{-1}$ is the inverse of $R_\omega$.
Since $(x_\omega, y_\omega, v_\omega) = R_\omega
(x_1,x_2,x_3)= R_\omega X$, recalling that $v_\omega= \omega(X)$
and $(x_\omega,y_\omega)=R_\omega(X) - \omega(X) V_\omega$, we have $f^{\rho \omega}(X) =g_{\rho,\omega}(x_\omega, y_\omega, v_\omega)=g_{\rho,\omega}(R_\omega(X) - \omega(X) V_\omega, \omega(X))$.
Then it makes sense to decompose the function $g_{\rho,\omega}(x_\omega, y_\omega, v_\omega)$ in eigenfuctions of $\Delta_{x_\omega, y_\omega}^{\rho}$, obtaining
\begin{align*}
f^{\rho \omega}(X) &=g_{\rho,\omega}(x_\omega, y_\omega, v_\omega)
\\
&=\sum\limits_{k=0}^\infty \Lambda_k^\rho g_{\rho,\omega}(x_\omega, y_\omega, v_\omega)
\\
&=\sum\limits_{k=0}^\infty \Lambda_k^\rho 
(f^{\rho \omega}\circ R_\omega^{-1})
\big(R_\omega(X) - \omega(X) V_\omega, \omega(X)\big).
\end{align*}

Taking the partial Fourier transform of
$g_{\rho,\omega}$ in $\mathfrak r_\omega$, 
it follows that
\begin{align*}
\mathfrak F_\omega g_{\rho,\omega}(x_\omega, y_\omega;\xi) 
&= \int_{-\infty}^\infty e^{-i s\xi} g_{\rho,\omega} 
\big(R_\omega(X) - \omega(X) V_\omega+sV_\omega\big) ds
\\
&=
\sum\limits_{k=0}^\infty 
\int_{-\infty}^\infty e^{-i s\xi} \Lambda_k^\rho g_{\rho,\omega}
\big(R_\omega(X) - \omega(X) V_\omega+sV_\omega\big) ds,
\end{align*}
and also
\begin{align*}
\mathfrak F_\omega g_{\rho,\omega}(x_\omega, y_\omega;\xi) 
&=
\sum\limits_{k=0}^\infty \Lambda_k^\rho \mathfrak F_\omega
g_{\rho,\omega}
\big(R_\omega(X) - \omega(X) V_\omega;\xi\big),
\end{align*}
since the operators $\Lambda_k^\rho$ and $\mathfrak F_\omega$, acting on different spaces, namely
$\mathfrak v_\omega$ and $\mathfrak r_\omega$,
 commute.
Thus, inverting the Fourier transform we obtain
\begin{align*}
g_{\rho,\omega}(x_\omega, y_\omega,v_\omega)
&=
\sum\limits_{k=0}^\infty 
\int_{-\infty}^\infty e^{i v_\omega \xi}
\Lambda_k^\rho \mathfrak F_\omega g_{\rho,\omega}(x_\omega, y_\omega;\xi)  d\xi.
\end{align*}
Hence, we have
\begin{align}\label{fourierinrad}
f^{\rho \omega}(X)  
&=
\sum\limits_{k=0}^\infty 
\int_{-\infty}^\infty e^{i \omega(X) \xi}
\Lambda_k^\rho \mathfrak F_\omega 
(f^{\rho \omega}\circ R_\omega^{-1})
\big(R_\omega(X) - \omega(X) V_\omega; \xi\big)  d\xi,
\end{align}
which plugged in \eqref{fourierinv} yields
\begin{align}\label{f}
f(X,Z) = \sum\limits_{k=0}^\infty \int_S \int_{0}^\infty e^{i \rho \omega(Z)} \int_{-\infty}^\infty e^{i \omega(X) \xi}
\Lambda_k^\rho \mathfrak F_\omega 
(f^{\rho \omega}\circ R_\omega^{-1})
\big(R_\omega(X) - \omega(X) V_\omega; \xi\big)  d\xi \rho^{2} d\rho
d\omega.
\end{align}

We may now deduce from \eqref{f} the spectral
decomposition of $f$ with respect to $L$ by
replacing the arguments of the sum and the integrals
with generalised eigenfunctions.
To do that we notice that by \eqref{Xf} we have
\begin{align*}
L f(X,Z) = \int_S \int_{0}^\infty e^{i \rho \omega(Z)} L^{\rho\omega}f^{\rho \omega} (X) \rho^{2} d\rho
d\omega,
\end{align*}
which by \eqref{fourierinrad} and \eqref{Lomegainadattate}
implies  
\begin{align*}
&\int_S \int_{0}^\infty e^{i \rho \omega(Z)} L^{\rho\omega}
\left(
\sum\limits_{k=0}^\infty 
\int_{-\infty}^\infty e^{i \omega(X) \xi}
\Lambda_k^\rho \mathfrak F_\omega 
(f^{\rho \omega}\circ R_\omega^{-1})
\big(R_\omega(X) - \omega(X) V_\omega; \xi\big)  d\xi
\right) \rho^{2} d\rho
d\omega
\\&=
\sum\limits_{k=0}^\infty 
\int_S \int_{0}^\infty e^{i \rho \omega(Z)}
\\&\times
\left( (-\partial_{v_\omega}^2 +\Delta^{\rho}_{x_\omega,y_\omega})
\int_{-\infty}^\infty e^{i v_\omega \xi}
\Lambda_k^\rho \mathfrak F_\omega 
(f^{\rho \omega}\circ R_\omega^{-1})
\big(x_\omega,y_\omega; \xi\big)  d\xi
\right)\Big|_{R_\omega(X)}
 \rho^{2} d\rho
d\omega
\\&=
\sum\limits_{k=0}^\infty 
\int_S \int_{0}^\infty e^{i \rho \omega(Z)}
\\&
\times
\left(
\int_{-\infty}^\infty e^{i v_\omega \xi}
(\xi^2+\Delta^{\rho}_{x_\omega,y_\omega})
\Lambda_k^\rho \mathfrak F_\omega 
(f^{\rho \omega}\circ R_\omega^{-1})
\big( x_\omega,y_\omega; \xi\big)  d\xi
\right)\Big|_{R_\omega(X)}
 \rho^{2} d\rho
d\omega
\\&=
\sum\limits_{k=0}^\infty 
\int_S \int_{0}^\infty e^{i \rho \omega(Z)}
\\&
\times
\left(
\int_{-\infty}^\infty e^{i v_\omega \xi}
(\xi^2+\rho(2k+1))\Lambda_k^\rho \mathfrak F_\omega 
(f^{\rho \omega}\circ R_\omega^{-1})
\big( x_\omega, y_\omega; \xi\big)  d\xi
\right)
\Big|_{R_\omega(X)}
\rho^{2} d\rho
d\omega
\\&=
\sum\limits_{k=0}^\infty 
\int_S \int_{0}^\infty e^{i \rho \omega(Z)}
\\&
\times
\left(
\int_{-\infty}^\infty e^{i \omega(X) \xi}
(\xi^2+\rho(2k+1))\Lambda_k^\rho \mathfrak F_\omega 
(f^{\rho \omega}\circ R_\omega^{-1})
\big(R_\omega(X)-\omega(X)V_\omega; \xi\big)  d\xi
\right) \rho^{2} d\rho
d\omega,
\end{align*}
where $\big|_{R_\omega(X)}$ means that
the integral must be computed in
$x_\omega X_\omega+ y_\omega Y_\omega= R_\omega(X)- \omega(X) V_\omega$ and $v_\omega=\omega(X)$.

Replacing in the last integral $\rho(2k+1)$
with $\rho$ we obtain
\begin{align*}
Lf(X,Z) &=
\int_S \sum\limits_{k=0}^\infty (2k+1)^{-3}
\int_{-\infty}^\infty \int_{0}^\infty  e^{i \rho \omega(Z)/(2k+1)}
e^{i \omega(X) \xi}(\xi^2+\rho)
\\&
\times
\Lambda_k^{\rho/(2k+1)} \mathfrak F_\omega 
(f^{\rho \omega/(2k+1)}\circ R_\omega^{-1})
\big(R_\omega(X)-\omega(X)V_\omega; \xi\big)
 \rho^{2} d\rho d\xi
d\omega,
\end{align*}
from which, setting $\mu=\xi^2+\rho$
and using Fubini's theorem, we deduce
\begin{align*}
Lf(X,Z) &=
\int_S \sum\limits_{k=0}^\infty (2k+1)^{-3}
\int_{-\infty}^\infty \int_{\xi^2}^\infty  e^{i (\mu-\xi^2) \omega(Z)/(2k+1)}
e^{i \omega(X) \xi}(\mu-\xi^2)^{2}
\\&
\times
\mu \Lambda_k^{(\mu-\xi^2)/(2k+1)} \mathfrak F_\omega 
(f^{(\mu-\xi^2) \omega/(2k+1)}\circ R_\omega^{-1})
\big(R_\omega(X)-\omega(X)V_\omega; \xi\big)
d\mu d\xi
d\omega
\\&= \int_{0}^\infty \mu
\int_S \sum\limits_{k=0}^\infty (2k+1)^{-3}
\int_{-\sqrt\mu}^{\sqrt\mu} e^{i (\mu-\xi^2) \omega(Z)/(2k+1)}
e^{i \omega(X) \xi} (\mu-\xi^2)^{2}
\\&
\times
\Lambda_k^{(\mu-\xi^2)/(2k+1)} \mathfrak F_\omega 
(f^{(\mu-\xi^2) \omega/(2k+1)}\circ R_\omega^{-1})
\big(R_\omega(X)-\omega(X)V_\omega; \xi\big)
d\xi
d\omega d\mu.
\end{align*}

The expression for $Lf$ just derived shows that
if we write
\begin{equation}\label{dec}
f(X,Z) = \int\limits_0^\infty \mathcal P_\mu f(X,Z) d\mu,
\end{equation}
where
\begin{align}\label{P}
\mathcal P_\mu f(X,Z) &=
\int_S \sum\limits_{k=0}^\infty (2k+1)^{-3}
\int_{-\sqrt\mu}^{\sqrt\mu} e^{i (\mu-\xi^2) \omega(Z)/(2k+1)}
e^{i \omega(X) \xi} (\mu-\xi^2)^{2}
\\\notag& 
\times
\Lambda_k^{(\mu-\xi^2)/(2k+1)} \mathfrak F_\omega 
(f^{(\mu-\xi^2) \omega/(2k+1)}\circ R_\omega^{-1})
\big(R_\omega(X)-\omega(X)V_\omega; \xi\big)
 d\xi
d\omega,
\end{align}
then
$$
L \mathcal P_\mu f = \mu \mathcal P_\mu f.
$$
Thus, \eqref{dec} provides
the spectral resolution of $f$
with respect to $L$.

\section{The restriction theorem
}

In this section we show that the operators
$\mathcal P_\mu$ satisfy some restriction estimates.
To state and prove this result we introduce nonisotropic norms
on $G$ defined, for $1\leq p,s < \infty$, by
$$
\|f\|_{L^s_\mathfrak zL^p_\mathfrak v}
=
\left(\int_{\mathfrak v}
\left(\int_{\mathfrak z} |f(X,Z)|^s dZ
\right)^{\frac ps}dV
\right)^{\frac1p},
$$
with the obvious modifications when $s$ or $p$ equal
$\infty$.
We shall prove the following theorem.

\begin{theorem}\label{teorema} Let $f$ be a Schwartz function on $G$
and let $1\leq p\leq2$ and $1\leq s\leq \frac65$, then
\begin{align}\label{stima}
\|\mathcal P_\mu f\|_{L^{s'}_\mathfrak zL^2_\mathfrak v}
\leq C \mu^{3\left(\frac1s-\frac1{s'}\right)
+3\left(\frac1p-\frac1{2}\right)} \|f\|_{L^s_\mathfrak zL^p_\mathfrak v}.
\end{align}
These estimates are false for $s>\frac65$.
\end{theorem}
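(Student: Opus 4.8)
The plan is to separate the two curvature/spectral mechanisms encoded in \eqref{P}. The integration over the sphere $S\subset\mathfrak z^*$ carries the central ($\mathfrak z$) variable and should be controlled by the Stein--Tomas theorem in $\mathfrak z\cong\mathbb R^3$, while the twisted projections $\Lambda_k^\lambda$ and the radial transform $\mathfrak F_\omega$ carry the horizontal ($\mathfrak v$) variable and should be controlled by the Koch--Ricci bound \eqref{KR 1}. Since the target is Hilbertian in $\mathfrak v$ and $s'\ge2$, I would first use the generalized Minkowski inequality to dominate the left-hand side of \eqref{stima} by the mixed norm in which the $L^2_{\mathfrak v}$ integration sits inside and the $L^{s'}_{\mathfrak z}$ integration sits outside; it then suffices to estimate $\|\mathcal P_\mu f\|_{L^{s'}_{\mathfrak z}(L^2_{\mathfrak v})}$, and this is the form into which a vector-valued Stein--Tomas estimate feeds directly.

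Next I would freeze $k$ and $\xi$ and read the inner integral $\int_S e^{i(\mu-\xi^2)\omega(Z)/(2k+1)}\,a(\omega)\,d\omega$ in \eqref{P} as the Fourier extension from the sphere of radius $\lambda_{k,\xi}=(\mu-\xi^2)/(2k+1)$ in $\mathfrak z^*$, with amplitude $a(\omega)=\Lambda_k^{\lambda_{k,\xi}}\mathfrak F_\omega(f^{\lambda_{k,\xi}\omega}\circ R_\omega^{-1})(\,\cdot\,;\xi)$. The essential observation is that $a(\omega)$ is vector valued: for each $\omega$ it is an element of $L^2$ of the horizontal plane spanned by $X_\omega,Y_\omega$, and the factor $e^{i\omega(X)\xi}$ records the radical variable $v_\omega=\omega(X)$. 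I would therefore run the $TT^*$ proof of Stein--Tomas with amplitudes valued in the Hilbert space $L^2_{\mathfrak v}$; this is legitimate because that argument uses only the $L^2$ orthogonality on $S$ and the decay of $\widehat{d\sigma}$, both of which tensor with a Hilbert space. The fact that the horizontal frame $(x_\omega,y_\omega)$ rotates with $\omega$ is harmless, as $R_\omega\in SO(3)$ and the $L^2_{\mathfrak v}$ norm is rotation invariant.

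For the horizontal amplitude I would invoke \eqref{KR 1}, which gives the gain $\lambda_{k,\xi}^{1/p-1/2}(2k+1)^{\gamma(1/p)}$ out of the $L^p$--to--$L^2$ mapping of $\Lambda_k^{\lambda_{k,\xi}}$ in the variables $(x_\omega,y_\omega)$, while the radical direction is handled by the Hausdorff--Young inequality for $\mathfrak F_\omega$ followed by H\"older on the compact frequency interval $\xi\in[-\sqrt\mu,\sqrt\mu]$ (this is where further powers of $\mu$ are produced, and where $p\le2$ is used). On the input side the sphere values $f^{\lambda_{k,\xi}\omega}$ are exactly the restriction of the $\mathfrak z$--Fourier transform of $f$ to the sphere, so the restriction half of Stein--Tomas converts them into $\|f\|_{L^s_{\mathfrak z}}$. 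Collecting the weight $(\mu-\xi^2)^2$, the Koch--Ricci gain, the Hausdorff--Young/H\"older powers and the radius powers from the rescaled extension, and summing in $k$ against the factor $(2k+1)^{-3}$ in \eqref{P} (which dominates $(2k+1)^{\gamma(1/p)-1/p+1/2}$ for every $p\in[1,2]$), I expect the powers of $\mu$ to combine into the exponent displayed in \eqref{stima}.

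The main obstacle is the coupling created by the substitution $\mu=\xi^2+\rho$: the radius $\lambda_{k,\xi}$ of the central sphere depends on the radial frequency $\xi$, so the extension in $\mathfrak z$ and the radial/horizontal analysis in $\mathfrak v$ cannot be carried out in isolation but must be interlocked while $\xi$ sweeps $[-\sqrt\mu,\sqrt\mu]$ and $\lambda_{k,\xi}$ varies. Arranging the bookkeeping so that neither the Stein--Tomas nor the Koch--Ricci exponent is wasted is exactly what pins the admissible range; the borderline is $s=6/5$, i.e. $s'=6$. Finally, to prove that \eqref{stima} fails for $s>6/5$ I would test it on the same family of functions concentrated along the degenerate radical directions that yields sharpness in \cite{CC}.
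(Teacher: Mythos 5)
Your outline assembles the right ingredients (homogeneity for the power of $\mu$, Stein--Tomas on the central sphere, the Koch--Ricci bound \eqref{KR 1} for the twisted projections, Plancherel/Hausdorff--Young in the radical direction, and the example of \cite{CC} for sharpness), but it stalls exactly at the step you yourself flag as ``the main obstacle,'' and that step is where the actual proof lives. Your plan is to run a vector-valued $TT^*$ Stein--Tomas argument with amplitudes valued in $L^2_{\mathfrak v}$ after freezing $k$ and $\xi$. For frozen $\xi$ the amplitude $\Lambda_k^{\lambda_{k,\xi}}\mathfrak F_\omega(\cdots)\big(R_\omega(X)-\omega(X)V_\omega;\xi\big)$ depends on $X$ only through its projection onto the plane $\mathfrak v_\omega$, so it is constant along the radical direction and is \emph{not} an element of $L^2_{\mathfrak v}$; to recover integrability in $v_\omega$ you must keep the $\xi$-integral inside, but then the phase $e^{i(\mu-\xi^2)\omega(Z)/(2k+1)}$ no longer describes a single sphere and the vector-valued extension estimate does not apply. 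You acknowledge the coupling but do not say how to break it, and ``arranging the bookkeeping'' is not a proof. The paper breaks the coupling differently: it dualizes against tensor functions $g=\gamma(Z)\delta(X)$, applies Cauchy--Schwarz in $\omega$ over $S$ to separate the two central factors $\hat\alpha,\hat\gamma$ from the $\mathfrak v$-pairing $\Psi(\omega;\xi,k)$ of \eqref{Psi}, and then needs only the \emph{scalar} Stein--Tomas restriction inequality at radius $(1-\xi^2)/(2k+1)$, applied pointwise in $(\xi,k)$; the remaining quantity $\sup_\omega|\Psi(\omega;\xi,k)|$ is estimated by Cauchy--Schwarz, Plancherel for $\mathfrak F_\omega$, the bound \eqref{KR 1}, H\"older in $\xi$, Minkowski's integral inequality and Hausdorff--Young. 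No vector-valued restriction theorem is needed.

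A second, concrete error: you claim the factor $(2k+1)^{-3}$ in \eqref{P} dominates $(2k+1)^{\gamma(1/p)-1/p+1/2}$ ``for every $p\in[1,2]$,'' as if the $k$-sum were independent of $s$. It is not: the Stein--Tomas bound at radius $r=(1-\xi^2)/(2k+1)$ costs $r^{-3/s'}$, and it is applied twice (once for each central factor in the dual pairing), so the summand in \eqref{5} carries the exponent $-3+\frac6{s'}+\gamma\left(\frac1p\right)-\left(\frac1p-\frac12\right)$. The growth $(2k+1)^{6/s'}$ is precisely what ties the convergence of the $k$-sum (and the finiteness of the $\xi$-integral, which requires $s'\ge4$) to the exponent $s$; omitting it leaves your claimed borderline $s'=6$ unmotivated. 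Until the decoupling step is actually carried out and the $(2k+1)^{6/s'}$ loss is tracked, the proposal does not yield \eqref{stima}.
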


\begin{proof} The sharpness of the range of $s$
where the estimates hold may be proved with
a suitable
and easy modification of the example provided in \cite{CC}.
So we prove only the bound \eqref{stima}.

The dependence on $\mu$ in the right hand side of \eqref{stima}
is dictated by the homogeneity of $L$.
Therefore, it suffices to discuss the case $\mu =1$.

To reduce the complexity of the notation
and make the formulas more readable we consider
a tensor function $f(X,Z) = \alpha(Z)\beta(X)$,
with $\alpha$ and $\beta$ Schwartz functions
on $\mathfrak z$ and $\mathfrak v$.
Then \eqref{P} reduces to
\begin{align*}
\mathcal P_1 f(X,Z) &=
\int_S \int_{-1}^{1} \sum\limits_{k=0}^\infty
\frac{(1-\xi^2)^{2}}{(2k+1)^{3}}
e^{i \frac{(1-\xi^2) \omega(Z)}{2k+1}}
e^{i \omega(X) \xi} 
\hat \alpha \left(\frac{(1-\xi^2)\omega}{2k+1}\right)
\\&\qquad\qquad
\times
\Lambda_k^{\frac{1-\xi^2}{2k+1}} \mathfrak F_\omega 
(\beta\circ R_\omega^{-1})
\big(R_\omega(X)-\omega(X)V_\omega; \xi\big)
 d\xi
d\omega,
\end{align*}
where $\hat \alpha$ denotes the Fourier transform
of $\alpha$.

Consider another tensor function
$g(X,Z)=\gamma(Z)\delta(X)$,
with $\|\gamma\|_{L^s_\mathfrak z} =1$ and
$\|\delta\|_{L^q_{\mathfrak v}} =1$, and compute
\begin{align*}
\langle \mathcal P_1 f, g \rangle &=
\int_{\mathfrak v} \int_{\mathfrak z}
P_1 f(X,Z) \overline{g(X,Z)} dZ dX.
\end{align*}
Then we have
\begin{align*}
\langle \mathcal P_1 f, g \rangle &=
\int_{\mathfrak v}
\int_{\mathfrak z}
\Bigg(
\int_S \int_{-1}^{1} \sum\limits_{k=0}^\infty
\frac{(1-\xi^2)^{2}}{(2k+1)^{3}}
e^{i \frac{(1-\xi^2) \omega(Z)}{2k+1}}
e^{i \omega(X) \xi} 
\hat \alpha \left(\frac{(1-\xi^2)\omega}{2k+1}\right)
\\&\qquad\qquad\times
\Lambda_k^{\frac{1-\xi^2}{2k+1}} \mathfrak F_\omega 
(\beta\circ R_\omega^{-1})
\big(R_\omega(X)-\omega(X)V_\omega; \xi\big)
 d\xi
d\omega
\Bigg) \overline{\gamma(Z)}
\overline{\delta(X)} dZdX
\\&=
\int_S \int_{-1}^{1} \sum\limits_{k=0}^\infty
\frac{(1-\xi^2)^{2}}{(2k+1)^{3}}
\hat \alpha \left(\frac{(1-\xi^2)\omega}{2k+1}\right)
\overline{\hat\gamma\left(\frac{(1-\xi^2) \omega}{2k+1}\right)}
\\&\qquad\qquad\times
\Bigg(
\int_{\mathfrak v}
e^{i \omega(X) \xi} 
\Lambda_k^{\frac{1-\xi^2}{2k+1}} \mathfrak F_\omega 
(\beta\circ R_\omega^{-1})
\big(R_\omega(X)-\omega(X)V_\omega; \xi\big)
\overline{\delta(X)} dX 
\Bigg) 
d\xi
d\omega.
\end{align*}

Changing the variables in the integral over $\mathfrak v$
we obtain (since $|\det R_\omega| =1$)
\begin{align*}
\langle \mathcal P_1 f, g \rangle
&=
\int_S \int_{-1}^{1} \sum\limits_{k=0}^\infty
\frac{(1-\xi^2)^{2}}{(2k+1)^{3}}
\hat \alpha \left(\frac{(1-\xi^2)\omega}{2k+1}\right)
\overline{\hat\gamma\left(\frac{(1-\xi^2) \omega}{2k+1}\right)}
\\&\qquad\quad\times
\Bigg(
\int_{\mathfrak v}
e^{i v_\omega \xi} 
\Lambda_k^{\frac{1-\xi^2}{2k+1}} \mathfrak F_\omega 
(\beta\circ R_\omega^{-1})
\big(x_\omega,y_\omega; \xi\big)
\overline{(\delta\circ R_\omega^{-1})(x_\omega,y_\omega,v_\omega)} dx_\omega
dy_\omega dv_\omega
\Bigg) 
d\xi
d\omega
\\
&=
\int_S \int_{-1}^{1} \sum\limits_{k=0}^\infty
\frac{(1-\xi^2)^{2}}{(2k+1)^{3}}
\hat \alpha \left(\frac{(1-\xi^2)\omega}{2k+1}\right)
\overline{\hat\gamma\left(\frac{(1-\xi^2) \omega}{2k+1}\right)}
\\&\qquad\quad\times
\Bigg(
\int_{\mathfrak v_\omega}
\Lambda_k^{\frac{1-\xi^2}{2k+1}} \mathfrak F_\omega 
(\beta\circ R_\omega^{-1})
\big(x_\omega,y_\omega; \xi\big)
\overline{\mathfrak F_\omega(\delta\circ R_\omega^{-1})(x_\omega,y_\omega;\xi)} dx_\omega
dy_\omega
\Bigg) 
d\xi
d\omega.
\end{align*}

Setting
\begin{align}\label{Psi}
\Psi(\omega;\xi,k)=
\int_{\mathfrak v_\omega}
\Lambda_k^{\frac{1-\xi^2}{2k+1}} \mathfrak F_\omega 
(\beta\circ R_\omega^{-1})
\big(x_\omega,y_\omega; \xi\big)
\overline{\mathfrak F_\omega(\delta\circ R_\omega^{-1})(x_\omega,y_\omega;\xi)} dx_\omega
dy_\omega,
\end{align}
we write
\begin{align*}
\langle \mathcal P_1 f, g \rangle
&=
\sum\limits_{k=0}^\infty
\frac{(1-\xi^2)^{2}}{(2k+1)^{3}}
\int_{-1}^{1} 
\left(\int_S
\hat \alpha \left(\frac{(1-\xi^2)\omega}{2k+1}\right)
\overline{\hat\gamma\left(\frac{(1-\xi^2) \omega}{2k+1}\right)}
\Psi(\omega;\xi,k) d\omega\right)
 d\xi.
\end{align*}
Then the Cauchy-Schwarz inequality yields 
\begin{align*}
|\langle \mathcal P_1 f, g \rangle|
&\leq
\sum\limits_{k=0}^\infty
\frac{1}{(2k+1)^{3}}
\int_{-1}^{1}
(1-\xi^2)^{2}\,
\Phi(\xi,k)\\
&\qquad\qquad\times
\left(\int_S
\left|\hat \alpha \left(\frac{(1-\xi^2)\omega}{2k+1}\right)
\right|^2 d\omega
\right)^{\frac12}
\left(\int_S
\left|{\hat\gamma\left(\frac{(1-\xi^2) \omega}{2k+1}\right)}
\right|^2 d\omega
\right)^{\frac12} d\xi,
\end{align*}
where we set 
\begin{align}\label{Phi}
\Phi(\xi,k) = \sup\limits_{\omega\in S}
|\Psi(\omega;\xi,k)|
\end{align}
to simplify the notation.
We remind that according to the Tomas-Stein theorem in $\mathbb R^3$ the inequality
\begin{equation*}
\left(\int_S
\left|\hat \eta \left(r \omega\right)
\right|^2 d\omega
\right)^{\frac12} \leq C_s r^{-3/s'} \|\eta\|_{L^s_\mathfrak z}
\end{equation*}
holds for $1\leq s \leq \frac43$ and $r>0$, where $\frac 1s+\frac 1{s'} =1$.
Thus, we obtain (being $\|\gamma\|_{L^s_\mathfrak z} =1$)
\begin{align}\label{5}
|\langle \mathcal P_1 f, g \rangle|
&\leq
\|\alpha\|_{L^s_\mathfrak z}
\sum\limits_{k=0}^\infty
\frac{1}{(2k+1)^{3-\frac6{s'}}}
\int_{-1}^{1}
(1-\xi^2)^{2-\frac6{s'}}
\Phi(\xi,k) d\xi.
\end{align}

To estimate the integral in the last formula,
we remind the definition \eqref{Phi} of $\Phi$
(see also \eqref{Psi})
and use the Cauchy-Schwarz inequality
in the integral on $\mathfrak v_\omega$
to obtain
\begin{align*}
&\int_{-1}^{1}
(1-\xi^2)^{2-\frac6{s'}}
\Phi(\xi,k) d\xi
\leq
\int_{-1}^{1}
(1-\xi^2)^{2-\frac6{s'}}
\\
&\times
\left|
\int_{\mathfrak v_\omega}
\Lambda_k^{\frac{1-\xi^2}{2k+1}} \mathfrak F_\omega 
(\beta\circ R_\omega^{-1})
\big(x_\omega,y_\omega; \xi\big)
\overline{\mathfrak F_\omega(\delta\circ R_\omega^{-1})(x_\omega,y_\omega;\xi)} dx_\omega
dy_\omega
\right|
d\xi
\\
&\leq
\int_{-1}^{1}
(1-\xi^2)^{2-\frac6{s'}}
\left(
\int_{\mathfrak v_\omega}
\left|\Lambda_k^{\frac{1-\xi^2}{2k+1}} \mathfrak F_\omega 
(\beta\circ R_\omega^{-1})
\big(x_\omega,y_\omega; \xi\big)\right|^2 dx_\omega
dy_\omega
\right)^{\frac12}
\\
&\qquad\qquad\times
\left(
\int_{\mathfrak v_\omega}
\left|\mathfrak F_\omega(\delta\circ R_\omega^{-1})(x_\omega,y_\omega;\xi)\right|^2 dx_\omega
dy_\omega
\right)^{\frac12}
d\xi.
\end{align*}
A further application of the Cauchy-Schwarz inequality
then implies
\begin{align}\label{1}
&\int_{-1}^{1}
(1-\xi^2)^{2-\frac6{s'}}
\Phi(\xi,k) d\xi
\\\notag
&\leq
\left(
\int_{-1}^{1}
(1-\xi^2)^{4-\frac{12}{s'}}
\int_{\mathfrak v_\omega}
\left|\Lambda_k^{\frac{1-\xi^2}{2k+1}} \mathfrak F_\omega 
(\beta\circ R_\omega^{-1})
\big(x_\omega,y_\omega; \xi\big)\right|^2 dx_\omega
dy_\omega d\xi
\right)^{\frac12}
\\\notag
&\qquad\qquad\times
\left(\int_{-\infty}^\infty
\int_{\mathfrak v_\omega}
\left|\mathfrak F_\omega(\delta\circ R_\omega^{-1})(x_\omega,y_\omega;\xi)\right|^2 dx_\omega
dy_\omega
d\xi
\right)^{\frac12}.
\end{align}

Since the Plancherel theorem
applied to $\mathfrak F_\omega$ yields
\begin{align*}
\int_{\mathfrak v_\omega}
\int_{-\infty}^\infty
\left|\mathfrak F_\omega(\delta\circ R_\omega^{-1})(x_\omega,y_\omega;\xi)\right|^2 
d\xi
dx_\omega
dy_\omega
&=
\int_{-\infty}^\infty
\int_{\mathfrak v_\omega}
\left|(\delta\circ R_\omega^{-1})(x_\omega,y_\omega,v_\omega)\right|^2 dx_\omega
dy_\omega
dv_\omega
\\
&=
\|\delta\|_{L^2_{\mathfrak v}}=1,
\end{align*}
(being $|\det R_\omega|=1$),
\eqref{1} reduces to
\begin{align}\label{2}
&\int_{-1}^{1}
(1-\xi^2)^{2-\frac6{s'}}
\Phi(\xi,k) d\xi
\\\notag
&\leq
\left(
\int_{-1}^{1}
(1-\xi^2)^{4-\frac{12}{s'}}
\int_{\mathfrak v_\omega}
\left|\Lambda_k^{\frac{1-\xi^2}{2k+1}} \mathfrak F_\omega 
(\beta\circ R_\omega^{-1})
\big(x_\omega,y_\omega; \xi\big)\right|^2 dx_\omega
dy_\omega d\xi
\right)^{\frac12}.
\end{align}

Then in force of the Koch-Ricci estimates
\eqref{KR 1} we deduce from \eqref{2} that
\begin{align}\label{3}
&\int_{-1}^{1}
(1-\xi^2)^{2-\frac6{s'}}
\Phi(\xi,k) d\xi
\leq C
\left({2k+1}\right)^{\gamma\left(\frac1p\right)-\left(\frac1p-\frac12\right)}
\\\notag&\times
\left(
\int_{-1}^{1}
(1-\xi^2)^{4-\frac{12}{s'}+2\left(\frac1p-\frac12\right)}
\left(
\int_{\mathfrak v_\omega}
\left|\mathfrak F_\omega 
(\beta\circ R_\omega^{-1})
\big(x_\omega,y_\omega; \xi\big)\right|^p dx_\omega
dy_\omega
\right)^{\frac2p}
 d\xi
\right)^{\frac12}.
\end{align}

Applying the H\"older inequality in \eqref{3} to the integration
 in $\xi$
with exponents $p'/2$ and $p'/(p'-2)= p/(2-p)$,
we deduce that
\begin{align*}
&\int_{-1}^{1}
(1-\xi^2)^{2-\frac6{s'}}
\Phi(\xi,k) d\xi
\\
&\leq
C
\left({2k+1}\right)^{\gamma\left(\frac1p\right)-
\left(\frac1p-\frac12\right)}
\left(
\int_{-1}^{1}
(1-\xi^2)^{\left(1-\frac{3}{s'}\right)\frac {4p}{2-p}+1}
d\xi
\right)^{\frac1p-\frac12}
\\
&
\times\left(
\int_{-\infty}^\infty
\left(
\int_{\mathfrak v_\omega}
\left|\mathfrak F_\omega 
(\beta\circ R_\omega^{-1})
\big(x_\omega,y_\omega; \xi\big)\right|^p dx_\omega
dy_\omega
\right)^{\frac{p'}p}
d\xi
\right)^{\frac1{p'}}.
\end{align*}
The first integral is finite
since $s'\geq 4$, which implies that
\begin{align*}
\left(1-\frac{3}{s'}\right)\frac {4p}{2-p}+1
&\geq
\frac {2}{2-p} >0.  
\end{align*}
Therefore, we have
\begin{align*}
&\int_{-1}^{1}
(1-\xi^2)^{2-\frac6{s'}}
\Phi(\xi,k) d\xi
\\
&\leq
C
\left({2k+1}\right)^{\gamma\left(\frac1p\right)-\left(\frac1p-\frac12\right)}
\left(
\int_{-\infty}^\infty
\left(
\int_{\mathfrak v_\omega}
\left|\mathfrak F_\omega 
(\beta\circ R_\omega^{-1})
\big(x_\omega,y_\omega; \xi\big)\right|^p dx_\omega
dy_\omega
\right)^{\frac{p'}p}
d\xi
\right)^{\frac1{p'}}.
\end{align*}
Now, being $p'/p \geq 1$,
we
may use the Minkowski integral inequality,
which gives
\begin{align*}
&\int_{-1}^{1}
(1-\xi^2)^{2-\frac6{s'}}
\Phi(\xi,k) d\xi
\\
&\leq
C
\left({2k+1}\right)^{\gamma\left(\frac1p\right)-\left(\frac1p-\frac12\right)}
\left(
\int_{\mathfrak v_\omega}
\left(
\int_{-\infty}^\infty
\left|\mathfrak F_\omega 
(\beta\circ R_\omega^{-1})
\big(x_\omega,y_\omega; \xi\big)\right|^{p'}
d\xi
\right)^{\frac p{p'}}
dx_\omega
dy_\omega
\right)^{\frac1{p}}.
\end{align*}
Then we apply the Hausdorff-Young inequality
in the inner integral
to $\mathfrak F_\omega$ and replace
the coordinates $x_\omega,y_\omega, v_\omega$
with $x,y,v$,
\begin{align*}
&\int_{-1}^{1}
(1-\xi^2)^{2-\frac6{s'}}
\Phi(\xi,k) d\xi
\\\notag
&\leq
C
\left({2k+1}\right)^{\gamma\left(\frac1p\right)-\left(\frac1p-\frac12\right)}
\left(
\int_{\mathfrak v_\omega}
\left(
\int_{-\infty}^\infty
\left| 
(\beta\circ R_\omega^{-1})
\big(x_\omega,y_\omega, v_\omega)\right|^{p}
dv_\omega
\right)
dx_\omega
dy_\omega
\right)^{\frac1{p}}
\\\notag
&\leq
C
\left({2k+1}\right)^{\gamma\left(\frac1p\right)-\left(\frac1p-\frac12\right)}
\left(
\int_{\mathfrak v}
\left| 
\beta
\big(x,y, v)\right|^{p}
dx
dydv
\right)^{\frac1{p}},
\end{align*}
deducing that
\begin{align}\label{4}
\int_{-1}^{1}
(1-\xi^2)^{2-\frac6{s'}}
\Phi(\xi,k) d\xi
\leq
C
\left({2k+1}\right)^{\gamma\left(\frac1p\right)-\left(\frac1p-\frac12\right)}
\|\beta\|_{L^p_\mathfrak v}.
\end{align}
Finally, plugging \eqref{4} in \eqref{5} we obtain
\begin{align}\label{6}
|\langle \mathcal P_1 f, g \rangle|
&\leq C
\|\alpha\|_{L^s_\mathfrak z} \|\beta\|_{L^p_\mathfrak v}
\sum\limits_{k=0}^\infty
(2k+1)^{\frac6{s'} - 3+\gamma\left(\frac1p\right)-\left(\frac1p-\frac12\right)}
\\\notag
&\leq C
\|\alpha\|_{L^s_\mathfrak z} \|\beta\|_{L^p_\mathfrak v}
= C
\|f\|_{L^s_\mathfrak zL^p_\mathfrak v}.
\end{align}
The series in fact converges for $1\leq s\leq 4/3$ and $1\leq p\leq2$ since by \eqref{gamma}, when $1\leq p\leq \frac65$,
we have
\begin{align*}
\frac6{s'} - 3+\gamma\left(\frac1p\right)-\left(\frac1p-\frac12\right)
\leq
-2
\end{align*}
and, when $\frac65\leq p\leq2$,
\begin{align*}
\frac6{s'} - 3+\gamma\left(\frac1p\right)-\left(\frac1p-\frac12\right)
&\leq
-\frac32.
\end{align*}
From \eqref{6} the estimate asserted in the statement
follows by duality, proving the theorem.
\end{proof}

\end{document}